\newtheorem{thm}{Theorem}[section]
\newtheorem{prop}[thm]{Proposition}
\newtheorem{define}[thm]{Definition}
\newtheorem{example}[thm]{Example}
\newtheorem{remarks}[thm]{Remarks}
\DeclareMathOperator{\ran}{ran}
\DeclareMathOperator{\dom}{dom}
\begin{document}

\title{Applications of a class of  Herglotz operator pencils.}

\author{Andrea K. Barreiro \footnote{Department of Mathematics, Southern
    Methodist University} \\
 Jared C. Bronski\footnote{Department of Mathematics, University of Illinois} \\ Zoi Rapti\footnote{Department of Mathematics, University of Illinois}  }

\date{\today}

\maketitle

\begin{abstract}
We identify a  class of operator pencils, arising in a number of
 applications, which have only real
eigenvalues. In the one-dimensional case we prove a novel version of
the Sturm oscillation theorem: if the dependence on the eigenvalue
parameter is of degree $k$ then the real axis can be partitioned into
a union of $k$ disjoint intervals, each of which enjoys a Sturm
oscillation theorem: on each interval there is an increasing sequence
of eigenvalues that are indexed by the number of roots of the
associated eigenfunction.  One consequence of this is that it
guarantees that 
the spectrum of these
operator pencils has finite accumulation points, implying that the
operators 
do not have compact resolvents.  
As an application we apply this theory to an epidemic model and several species dispersal models arising in biology.
\end{abstract}

\section{Introduction}
\label{sec:intro}

There are a number of contexts in which  eigenvalue pencils 
arise in applied mathematics \cite{kollar2014}. 
When studying the stability of standing waves and other coherent structures 
it is frequently the case that one must consider a non-self-adjoint
eigenvalue pencil. In the context of integrable systems the scattering
problem that diagonalizes an integrable  nonlinear flow frequently
takes the form of a eigenvalue pencil; take, for instance the
Sine-Gordon equation in the laboratory frame \cite{Takhtajan}  or the derivative
nonlinear Schr\"odinger equation \cite{KaupNewell}. Additionally we
will present a number of other problems arising in the analysis of
biological problems where rational operator pencils arise.     
The difficulty with these problems, from the point of view of 
both analysis and numerical experimentation, is that one 
has no apriori information as to where the 
spectrum of the problem might lie. The purpose of this note is to
point out a class of pencils, arising in a number of applications,
which only admit real eigenvalues, namely those that depend on the
spectral parameter in a Herglotz way. We show that a
number of the properties of self-adjoint eigenvalue problems carry
over to Herglotz operator pencils: the eigenvalues are real and
semi-simple, and for second order rational Herglotz pencils we have 
an interesting generalization of the Sturm oscillation theorem. We begin 
by reminding the reader of the definition of a Herglotz function
\cite{Simon}. These functions are also sometimes referred to as
Nevanlinna or Pick functions, but in the spectral theory literature
they are most often referred to as Herglotz functions, so that is the
usage that we will adopt here. 

\begin{define}
A meromorphic function $f(\lambda)$ is Herglotz if $\operatorname{Im}(\lambda)>0$ (resp. $\operatorname{Im}(\lambda)<0$) implies that $\operatorname{Im}(f(\lambda))>0$ (resp. $\operatorname{Im}(f(\lambda))<0$))
\end{define}     
The following is standard:
\begin{prop}
If $f(\lambda)$ is Herglotz then 
\begin{itemize}
\item The zeroes and poles of $f(\lambda)$ lie on the real axis.
\item If $\lambda$ is real and not a pole then
$f'(\lambda)>0$. 
\item The zeroes and poles of $f(\lambda)$ alternate on the real axis. 
\end{itemize}
\end{prop}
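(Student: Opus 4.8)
The plan is to establish the three bullets in sequence, using only the defining property that $f$ preserves the sign of the imaginary part on each open half-plane, together with the fact that $f$ is meromorphic and hence has isolated zeros and poles. First I would record the elementary but crucial observation that $f$ is real on the real axis away from its poles: if $\lambda_0\in\R$ is not a pole then $f$ is continuous there, and approaching along $\lambda_0+i\epsilon$ forces $\operatorname{Im}(f(\lambda_0))\ge 0$ while approaching along $\lambda_0-i\epsilon$ forces $\operatorname{Im}(f(\lambda_0))\le 0$, so $f(\lambda_0)\in\R$. The first bullet is then immediate for zeros: a zero in the open upper (resp. lower) half-plane would give $\operatorname{Im}(f)=0$ there, contradicting the definition.

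For the poles I would pass to $g=-1/f$. A short computation gives $\operatorname{Im}(-1/f)=\operatorname{Im}(f)/|f|^2$, so $g$ is again Herglotz, and the poles of $f$ are exactly the zeros of $g$; by the zero statement just proved these are real. The same local computation shows each pole is simple with negative residue: writing $f(\lambda)=r/(\lambda-p)+\text{(holomorphic)}$ and evaluating at $p+i\epsilon$, positivity of $\operatorname{Im} f$ forces $r<0$. This refinement is what makes the alternation argument go through later.

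For the second bullet I would expand $f$ about a real point $\lambda_0$ that is not a pole. Since $f$ is real there, $f'(\lambda_0)\in\R$, and $f(\lambda_0+i\epsilon)=f(\lambda_0)+i\epsilon f'(\lambda_0)+O(\epsilon^2)$ yields $\operatorname{Im} f(\lambda_0+i\epsilon)=\epsilon f'(\lambda_0)+O(\epsilon^2)\ge 0$, whence $f'(\lambda_0)\ge 0$. The main obstacle is upgrading this to the strict inequality $f'(\lambda_0)>0$, since the first-order computation does not by itself exclude $f'(\lambda_0)=0$. To close this I would use that $f$ is non-constant and analytic: if $f'(\lambda_0)=0$ then $f(\lambda)-f(\lambda_0)=c(\lambda-\lambda_0)^k(1+o(1))$ with $k\ge 2$, and such a map is locally $k$-to-one and sends points arbitrarily close to $\lambda_0$ from the upper half-plane into both half-planes, contradicting the sign-preserving property. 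Equivalently, one may invoke the Herglotz representation $f(\lambda)=a\lambda+b+\int(\frac{1}{t-\lambda}-\frac{t}{1+t^2})\,d\mu(t)$ with $a\ge 0$ and $\mu\ge 0$, which differentiates to $f'(\lambda)=a+\int(t-\lambda)^{-2}\,d\mu(t)>0$ for real $\lambda$.

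Finally, the third bullet follows by combining monotonicity with the blow-up behavior at poles. On any open interval between consecutive poles $p_1<p_2$ the function $f$ is real-analytic and strictly increasing by the second bullet, while the simple-pole-with-negative-residue structure gives $f(\lambda)\to-\infty$ as $\lambda\to p_1^+$ and $f(\lambda)\to+\infty$ as $\lambda\to p_2^-$. A strictly increasing continuous function running from $-\infty$ to $+\infty$ has exactly one zero, so there is precisely one zero strictly between consecutive poles. Conversely, two consecutive zeros cannot occur without an intervening pole, for otherwise $f$ would be strictly increasing and continuous across them yet vanish at both endpoints. Together these give the strict interlacing asserted in the third bullet.
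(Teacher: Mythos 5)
The paper itself gives no proof of this proposition: it is labeled ``standard'' and left to the references, so your argument has to stand on its own --- and it does. Your chain of reasoning (reality of $f$ on $\R$ away from poles by a continuity/limit argument; reality of zeros immediately from the definition; reality of poles by passing to the Herglotz function $-1/f$; nonnegativity of $f'$ from the first-order Taylor expansion; strict positivity via the local $k$-to-one mapping behavior of a nonconstant analytic function; and interlacing from strict monotonicity combined with the sign of the blow-up at each pole) is the standard elementary proof of these facts, and each step is sound. One point deserves tightening: you claim ``the same local computation shows each pole is simple,'' but the computation you actually write, evaluating $r/(\lambda-p)+\text{(holomorphic)}$ at $p+i\epsilon$, already presupposes that the pole is simple. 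Simplicity follows from the same argument-range device you invoke for $f'>0$: if the pole at real $p$ has order $m$, then as $\lambda$ traverses a small upper-half semicircle around $p$ the argument of $f$ sweeps an interval of length $m\pi$, which is incompatible with $\operatorname{Im} f>0$ on the upper half-plane unless $m=1$ (and then the residue must be negative, as you say). Alternatively, the interlacing step does not really need simplicity or the residue sign at all: on the real interval immediately to the right of a pole, $f$ is continuous, strictly increasing by the second bullet, and satisfies $|f|\to\infty$ at the pole, which forces $f\to-\infty$ there; symmetrically $f\to+\infty$ as the pole is approached from the left. With that repair (or shortcut) your proof is complete, and it is in fact more self-contained than the paper, which supplies none.
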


For the purposes of this paper the canonical example of a Herglotz function 
is the rational function 
\[
f(\lambda) = \sum_{i=1}^N \frac{A_i}{\alpha_i - \lambda} + B + C \lambda
\]
with $A_i,B,C$ real and positive and $\alpha_i$ real, as all of the
problems we consider lead to rational operator pencils.   More
generally one has a representation formula for any Herglotz function 
\[
f(\lambda) = B + C \lambda + \int_{\mathbb R} \left( \frac{1}{x-\lambda} -
\frac{\lambda}{1+\lambda^2} \right) d\mu(x)
\] 
for some measure $d\mu(x)$ but we are unaware of any applications that require
this general representation. 

\section{Motivating Examples}
\label{sec:examples}
In this section we present some examples intended to motivate  the later
discussion.  As a first example we consider a reaction-diffusion equation of the form 
\begin{align}
& u_t = u_{xx} + f(u,v) \\
& v_t = g(u,v) 
\label{eqn:ReactDiffuse}
\end{align}
together with appropriate boundary conditions. Assuming that we can
find a steady state solution 
\begin{align}
&u(x,t) = \bar u(x) \\
&v(x,t) = \bar v(x)
\end{align}
we consider the linearization of the evolution around the steady state
$(\bar u, \bar v)$. This leads to the following equation for the
tangent flow 
\begin{align}
&p_t = p_{xx} + \frac{\partial f}{\partial u}(\bar u,\bar v) p +\frac{\partial f}{\partial v}(\bar u,\bar v) q  \\
& q_t = \frac{\partial g}{\partial u}(\bar u,\bar v) p +\frac{\partial g}{\partial v}(\bar u,\bar v) q 
\label{eqn:ReactDiffuseLSA}
\end{align}
or, equivalently the linearized stability problem
\begin{align}
&\lambda p = p_{xx} + \frac{\partial f}{\partial u}(\bar u,\bar v) p
  +\frac{\partial f}{\partial v}(\bar u,\bar v) q  
  \label{eqn:LinStab1} \\
& \lambda q = \frac{\partial g}{\partial u}(\bar u,\bar v) p +\frac{\partial g}{\partial v}(\bar u,\bar v) q 
\label{eqn:LinStab2}
\end{align}
This is an ordinary, though non-selfadjoint eigenvalue problem, but
after one eliminates the function $q$ it becomes a rational eigenvalue
pencil. 
\[
\lambda p = p_{xx} + \frac{\partial f}{\partial u}(\bar u,\bar v) p
+\frac{\partial f}{\partial v}(\bar u,\bar v)  \frac{\partial
  g}{\partial u}(\bar u,\bar v) \frac{1}{\lambda - \frac{\partial g}{\partial v}(\bar u,\bar v) }
\]

The main observation is that under the sign condition $f_v g_u(\bar
u,\bar v)\geq 0$ the right hand side of the operator pencil is (for $x$
fixed), a Herglotz function which implies some very nice behavior in
the eigenvalue problem, including (as we shall see) reality of the spectrum and
semi-simplicity of the eigenvalues. This generalizes the case when
$f_v=g_u$, for which case the flow is a gradient and the second variation is
obviously self-adjoint.  

One could object that the original system (\ref{eqn:LinStab1},
\ref{eqn:LinStab2}) is, in fact, self-adjoint under the modified inner
product $\Vert (p,q)\Vert_2^2 = \int v^2 + \frac{f_w}{g_v}w^2$ when
the sign condition $f_w g_v>0$ is met. In some sense this must happen,
since it is an old theorem of Drazin and  Haynsworth \cite{Haynsworth1962} that any matrix
having all real eigenvalues and $n$ linearly  independent eigenvectors
is self-adjoint under some appropriate inner product.  However, in
general it is unclear how to check whether there exists an
inner product rendering a given operator self-adjoint. Checking that a
given function is Herglotz, on the other hand, is relatively
straightforward, and amounts (for a rational pencil) to checking a
finite
number of reality/sign conditions.  

One biological model that falls into this class is the following system modeling
the spread of a human disease through infective propagules \cite{capasso1997} 
\begin{eqnarray}
u_t &=& d u_{xx} - a_{11} u+ a_{12} v = d u_{xx} + f(u, v) \\
v_t &=&  \tilde{g}(u)- a_{22} v = g(u, v). 
\end{eqnarray}
Here, $u$ denotes the spatial density of the infectious propagules, $v$ denotes the 
density of the human infective class; the parameters $d, a_{ij} >0$; the function
$\tilde{g}: \mathbb{R}_+ \to \mathbb{R}_+ $ and 
$\tilde{g}'(u)>0$ for $u>0$.
Then, the corresponding eigenvalue problem reads 
\[ d p_{xx} -a_{11} p = \lambda p - a_{12} \tilde{g}' \frac{1}{\lambda + a_{22}} p. \]
Since $a_{12} \tilde{g}'>0 $ the condition for the reality of the spectrum follows. 

Equations of similar form also arise in combustion, typically solid
combustion. In this situation heat is free to diffuse but the solid
reactant does not diffuse, at least on combustion time-scales. For
work on this problem we refer the reader to the article of Ghazaryan,
Latushkin and Schechter \cite{Ghazaryan2010a, Ghazaryan2010b}, but we do note that in
such combustion problems the sign condition is typically not
  met: in the combustion context one expects $f_w g_u<0.$

Similarly  we can consider a system with three reacting species, one of
which is allowed to diffuse 
\begin{eqnarray}
&& u_t = u_{xx} + f(u,v,w) \nonumber \\
&& v_t = g(u,v,w) \label{eq:twodiff} \\
&& w_t = h(u,v,w). \nonumber
\end{eqnarray}
Linearizing about a steady state solution yields the following eigenvalue
problem
\begin{eqnarray}
&& \lambda p = p_{xx} + f_u p +f_v q + f_w s \nonumber \\
&& \lambda q = g_u p + g_v q + g_w s\nonumber \\
&& \lambda s = h_u p + h_v q + h_w s \nonumber
\end{eqnarray}
Solving the last two for $q, s$ in terms of  $p$ results in
\[q = \frac{g_u(\lambda - h_w) + h_u g_w }{(\lambda - g_v)(\lambda - h_w) - g_w h_v} p,~~
s = \frac{(\lambda - g_v) h_u + h_v g_u }{(\lambda - g_v)(\lambda - h_w) - g_w h_v} p\]
and substituting into the first equation yields
\begin{eqnarray*}
&& p_{xx} + f_u p = \left( \lambda - 
f_v \frac{g_u(\lambda - h_w)+ h_u g_w}{(\lambda - g_v)(\lambda - h_w) - g_w h_v} - 
f_w \frac{h_u(\lambda - g_v)+ h_v g_u}{(\lambda - g_v)(\lambda - h_w)-g_w h_v} \right) p 
\equiv H(\lambda) p \\
&& p_{xx} + f_u p = \left( \lambda - \frac{\alpha(x) \lambda + \beta(x)
   }{\lambda^2 + \gamma(x) \lambda + \delta(x)} \right)
\end{eqnarray*}
The function $\lambda - \frac{\alpha(x) \lambda + \beta(x)
   }{\lambda^2 + \gamma(x) \lambda + \delta} $ is (for fixed $x$) a
   Herglotz function of $\lambda$ if and only if the following
conditions are met 
\begin{itemize}
\item The roots of $\lambda^2 + \gamma(x) \lambda + \delta(x)$ are
  real.
\item The residues of the function $\frac{\alpha(x) \lambda + \beta(x)
   }{\lambda^2 + \gamma(x) \lambda + \delta}$ at the roots are
   positive.
\end{itemize}
 The above two conditions translate into the following sign conditions
 on the coefficients.
\begin{itemize}
\item $\beta \gamma - \alpha \delta >0$
\item $\beta^2 - \alpha (\beta \gamma - \alpha \delta)<0$
\end{itemize}
Equivalently, reality of  the spectrum follows if 
\begin{eqnarray} 
\left| \begin{array}{cc} f_v & f_w \\
g_v & g_w  \end{array} \right| \left| \begin{array}{cc} g_u & g_v \\
h_u & h_v  \end{array} \right| + \left| \begin{array}{cc} f_v & f_w \\
h_v & h_w  \end{array} \right| \left| \begin{array}{cc} g_u & g_w \\
h_u & h_w  \end{array} \right| >0 ~~ \mbox{and} \nonumber \\
\left( h_u \left| \begin{array}{cc} g_u & g_w \\
h_u & h_w  \end{array} \right| + g_u \left| \begin{array}{cc} g_u & g_v \\
h_u & h_v  \end{array} \right| \right) \left(  f_v \left| \begin{array}{cc} f_v & f_w \\
g_v & g_w  \end{array} \right| + f_w \left| \begin{array}{cc} f_v & f_w \\
h_v & h_w  \end{array} \right|\right)>0
\nonumber
\end{eqnarray} 


A system in the form  of (\ref{eq:twodiff}) is considered in \cite{feng2013} 
as a model of the dynamics of two plants $v, w$ and one herbivore $u$
\begin{eqnarray*}
u_t &=& d u_{xx} +\left(b_1 s_1 v + 
b_2 s_2 w \left(1 - \frac{s_2 w}{4 n (1+h_1 s_1 v+ h_2 s_2 w)} \right) - m \right) 
\frac{u}{1+h_1 s_1 v+ h_2 s_2 w} \\
&=& d u_{xx} + f(u, v, w) \\
v_t &=&  r_1 v \left( 1 -(v+a_{12} w) \right) - 
\frac{s_1 u v}{1+h_1 s_1 v+ h_2 s_2 w}  = g(u, v, w), \\
w_t &=&  r_2 w \left( 1 -(w+a_{21} v) \right) - 
\frac{s_2 u w}{1+h_1 s_1 v+ h_2 s_2 w} 
\left(1 - \frac{s_2 w}{4 n (1+h_1 s_1 v+ h_2 s_2 w)} \right)  = h(u, v, w),
\end{eqnarray*}
where $d, a_{ij} >0$ and all other parameters are positive.

In \cite{lou2004} three models of morphogen concentrations were analyzed. 
Nonlinear reaction-diffusion equations  were used to model
morphogen  gradients that arise in the patterning of {\it Drosophila} wings.
In a series of works, the authors showed that the linear
stability analysis of steady state solutions yields a nonlinear eigenvalue
problem. They also demonstrated that the spectrum is real and stable.
In this section, we will offer an alternative proof  of these facts using
our main result.

Following the notation in \cite{lou2004}, the dimensionless normalized 
concentration of the diffusing morphogen species is denoted with $a(x, t)$ and 
that of the morphogens that are bound to receptors is denoted by $b(x, t)$.
Then, the system reads
\begin{eqnarray}
&& \frac{\partial a}{\partial t} = \frac{\partial^2 a}{\partial x^2} -h_0 a(1-b) +f_0 b,
~~ x \in (0, 1),~~ t>0, ~~ \nonumber \\
&& \frac{\partial a(0,t)}{\partial t} = \nu_0 -h_o a(0,t) (1-b(0,t)) + f_0 b(0,t),~~ a(1,t)=0 \label{dmorph} \\
&& \frac{\partial b}{\partial t} = h_0 a(1-b)  - (f_0 +g_0) b,
~~ x \in [0, 1],~~ t>0. \label{bmorph} 
\end{eqnarray}
Here, the constants $h_0, f_0, g_0, \nu_0$ are positive and correspond to 
normalized rates.


If  one  linearizes about the time-independent steady-state solution 
$\left( \bar{a}(x), ~\bar{b}(x) \right)$ considering solutions of  the form 
\[a(x,t) = \bar{a}(x) + e^{\lambda t} \hat{a}(x), ~~
b(x,t) = \bar{b}(x) +e^{\lambda t} \hat{b}(x,t),\]

and after some standard algebra to eliminate the algebraic equation for the bound 
morphogen $b$, the following nonlinear eigenvalue problem is obtained
\begin{eqnarray}
\hat{a}_{xx} - \left( \lambda + q(x, \lambda) \right) \hat{a} =0, ~~\mbox{where}~~
q(x, \lambda) = \frac{h_0 (f_0 +g_0)}{f_0 + g_0 + h_0 \bar{a}} 
\frac{\lambda + g_0}{\lambda + f_0 + g_0 + h_0 \bar{a}},
\label{nonlinmorph}
\end{eqnarray} 
together with appropriate boundary conditions.


One easily notices that $q(x, \lambda)$ is a Herglotz function, since 
\[f_0+g_0+h_0 \bar{a} -g_0 = f_0 + h_0 \bar{a} >0.\] Therefore,
\[Q(x, \lambda) = \lambda + q(x, \lambda)\] is a Herglotz function as well.
Using our main result it then follows  that
the spectrum is real. 

As these examples illustrate, a wealth of biological models have similar structure.
Their common theme is that some degrees of freedom permit diffusion or mobility, while others do not.
In many compartmental systems arising in biological applications, 
some of the components are mobile (such as herbivores and pollinators) while others 
are not (such as plants) \cite{feng2013, lewis1994, morris1997, sanchez2011}. 
In epidemic compartmental models, such distinctions between mobile and 
motionless compartments also arise \cite{capasso1997, murray1986, wang2012} due 
to the fact that infected animals are mainly responsible for spreading the disease, 
or when infective propagules disperse randomly while the infected  population is 
characterized by small  mobility.  Another setting where such models arise is in pattern formation, where diffusing
morphogens interact with others that are bound to cell receptors \cite{lou2004}.

The final example is more theoretical, and is motivated by work of Kapitula and Promislow \cite{kapitula2012} on
the so-called Krein matrix method. Consider a self-adjoint eigenvalue
problem that can be written in the following block-partitioned form
\[
({\bf H}-\lambda) v = \left(\begin{array}{cc} {\bf A}-\lambda & {\bf B} \\ {\bf B}^t &
                                                                     {\bf
                                                                     C}-\lambda \end{array}\right)
                                                               v = 0,
\]
with ${\bf A} = {\bf A}^t$ and ${\bf C} = {\bf C}^t$. 
The main idea is that, if we algebraically eliminate certain degrees
of freedom, we are naturally led to a Herglotz pencil. To see this
note that, if $\lambda$ is in the resolvent set of ${\bf C}$ then we
have the following identity.
\begin{align*}
{\bf U} {\bf H} {\bf U}^t &=    
\left(\begin{array}{cc} {\bf I} & -{\bf B}({\bf C}-\lambda)^{-1} \\ 
0 &   {\bf I} \end{array}\right)
\left(\begin{array}{cc} {\bf A} & {\bf B} \\
 {\bf B}^t & {\bf C} \end{array}\right)
  \left(\begin{array}{cc} {\bf I} & 0 \\ 
-({\bf C}-\lambda)^{-1}{\bf B}^t & {\bf I} \end{array}\right)\\
&=
 \left(\begin{array}{cc}{\bf A}- \lambda -{\bf B}^t ({\bf C}-\lambda)^{-1}{\bf B} &0 \\
0&  ({\bf C}-\lambda) \end{array}\right)
\end{align*}
Therefore for $\lambda$ in the resolvent set of ${\bf C}$ the operator
${\bf U}$ is bounded and invertible and the
eigenvalues of the full operator are eigenvalues of the Herglotz
operator pencil ${\bf A} - \lambda - {\bf B}^t ({\bf C}-\lambda)^{-1}
{\bf B}.$ If $\lambda$ is in the spectrum of ${\bf C}$ (the complement
of the resolvent set) one must do
additional work to determine whether or not $\lambda$  is an eigenvalue of the full
problem -- see the work of Kapitula and Promislow for details. This
example shows that Schur reduction of a self-adjoint eigenvalue
problem leads naturally to a Herglotz operator pencil, and further motivates
the idea that Herglotz pencils should behave
like self-adjoint eigenvalue problems. 

\section{Main Results}
\label{sec:results}

Our basic observation  is that the
standard elementary argument that the spectrum of a self-adjoint
operator is real carries over in a natural way to operator pencils
that depend on the eigenvalue in a Herglotz way.  The main claim of this paper is 
\begin{thm}
Suppose that we have an eigenvalue pencil of the form 
\[
{\bf H}\psi = \left( A_0 \lambda - \sum_{k=1} \frac{A_i}{\lambda - \alpha_i} \right) \psi
\]
where ${\bf H}$ is a self-adjoint operator on some domain $\dom({\bf H})$, and ${\bf A}_i$ are
self-adjoint positive definite operators on  $\dom({\bf H})$.  Then the eigenvalue pencil has only real
eigenvalues. Furthermore the eigenvalues are semi-simple. 
\end{thm}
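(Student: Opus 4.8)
The plan is to mimic, at the level of quadratic forms, the two elementary facts about self-adjoint operators: that pairing the eigenvalue equation against the eigenvector produces a real constraint, and that a strict sign on a derivative rules out Jordan chains. Throughout I would write the pencil as ${\bf M}(\lambda)\psi = 0$ with
\[
{\bf M}(\lambda) = A_0\lambda - \sum_i \frac{A_i}{\lambda - \alpha_i} - {\bf H},
\]
an operator-valued function that is self-adjoint whenever $\lambda$ is real, since it is then a real linear combination of the self-adjoint operators ${\bf H}$, $A_0$ and the $A_i$.

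First I would establish reality. Given an eigenpair $(\lambda,\psi)$ with $\psi\neq 0$, pair ${\bf M}(\lambda)\psi=0$ against $\psi$ to obtain the scalar identity
\[
\lambda\,\langle\psi,A_0\psi\rangle + \sum_i \frac{\langle\psi,A_i\psi\rangle}{\alpha_i-\lambda} = \langle\psi,{\bf H}\psi\rangle.
\]
Because ${\bf H}$, $A_0$ and the $A_i$ are self-adjoint, the right-hand side and the coefficients $\langle\psi,A_0\psi\rangle$, $\langle\psi,A_i\psi\rangle$ are real, and positive-definiteness makes the latter strictly positive. Hence the left-hand side is precisely the canonical rational Herglotz function of the Introduction evaluated at $\lambda$, with positive residues and positive linear coefficient. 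A Herglotz function maps the open upper (resp. lower) half-plane strictly into itself, so it cannot assume the real value $\langle\psi,{\bf H}\psi\rangle$ off the real axis; since the $\alpha_i$ are real the point $\lambda$ is not a pole, and I conclude $\lambda\in\mathbb{R}$.

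For semi-simplicity I would show that no eigenvalue admits a Jordan chain of length two. Fix a real eigenvalue $\lambda_0$ and a nonzero $\psi_0\in\ker{\bf M}(\lambda_0)$; a chain of length two requires $\psi_1$ solving ${\bf M}(\lambda_0)\psi_1 = -{\bf M}'(\lambda_0)\psi_0$. Since ${\bf M}(\lambda_0)$ is self-adjoint, the Fredholm solvability condition forces $-{\bf M}'(\lambda_0)\psi_0\perp\ker{\bf M}(\lambda_0)$, and in particular $\langle\psi_0,{\bf M}'(\lambda_0)\psi_0\rangle=0$. But differentiating gives
\[
{\bf M}'(\lambda) = A_0 + \sum_i \frac{A_i}{(\lambda-\alpha_i)^2},
\]
which is positive definite because $A_0$ and the $A_i$ are, so $\langle\psi_0,{\bf M}'(\lambda_0)\psi_0\rangle>0$ for $\psi_0\neq 0$. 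This contradiction shows the solvability condition fails for every eigenvector, so there are no generalized eigenvectors and the eigenvalue is semi-simple. Scalar-wise, this positivity is just the operator shadow of the inequality $f'(\lambda_0)>0$ for the Herglotz function produced in the reality step.

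The routine part is reality; the main obstacle is making the semi-simplicity argument rigorous in the intended functional-analytic setting. One must fix a precise notion of semi-simplicity for an operator pencil (equality of partial multiplicities of root functions, in the sense of Keldysh and of Gohberg--Lancaster--Rodman) and justify the Fredholm alternative for the self-adjoint operator ${\bf M}(\lambda_0)$ when ${\bf H}$ is unbounded; in particular one needs ${\bf M}(\lambda_0)$ to have closed range, so that $\ran{\bf M}(\lambda_0)=\bigl(\ker{\bf M}(\lambda_0)\bigr)^{\perp}$ and the orthogonality obstruction is genuine. Once that framework is in place, the entire content is carried by the two sign facts: positivity of the residues and the linear term (reality), and positive-definiteness of ${\bf M}'$ (semi-simplicity).
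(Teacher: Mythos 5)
Your proof is correct and takes essentially the same route as the paper: reality by pairing the pencil against the eigenfunction to obtain a scalar Herglotz equation with positive residues, and semi-simplicity by pairing the Jordan-chain equation against $\psi_0$ and invoking positive-definiteness of ${\bf M}'(\lambda_0)$. The one point worth noting is that the closed-range/Fredholm machinery you flag as the main obstacle is not actually needed: you only use the \emph{necessity} direction of the solvability condition, which follows from symmetry of ${\bf M}(\lambda_0)$ alone, namely $\langle\psi_0,{\bf M}(\lambda_0)\psi_1\rangle=\langle{\bf M}(\lambda_0)\psi_0,\psi_1\rangle=0$, which is exactly how the paper argues.
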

\begin{proof}
Suppose that $\lambda$ is an eigenvalue with eigenfunction $\psi$. 
Taking the inner product of both sides with $\psi$ we have that 
\[
\langle\psi, {\bf H} \psi \rangle = \langle \psi, {\bf A}_0
\psi\rangle \lambda - \sum_{k=1} \frac{\langle \psi,{\bf A}_k \psi
  \rangle}{\lambda - \alpha_i}
\]
From the self-adjointness of ${\bf H}$ we have that the left-hand side is 
purely real; the righthand side is a Herglotz function. 
Thus, if $\psi(x)$ is an eigenfunction the $\lambda $ is a root of some
Hergoltz function $\tilde g_\psi(\lambda)$ and therefore must be real. 

To see the semi-simplicity of the eigenvalues note that the
condition for the existence of a Jordan chain for an operator pencil
is the existence of a solution to 
\begin{align*}
&{\bf M}(\lambda_0) \psi_0  = 0& \\
&{\bf M}(\lambda_0) \psi_1 +   \frac{d{\bf M}}{d\lambda}(\lambda_0) \psi_0 = 0& 
\end{align*}
with $\psi_0$ non-zero. Taking the inner product of the second
equation  with $\psi_0$ gives 
\[
\langle \psi_0, \frac{d{\bf M}}{d\lambda}(\lambda_0) \psi_0 \rangle =0,
\]
but the Herglotz nature of the operator implies that the operator
$\frac{d{\bf M}}{d\lambda}(\lambda_0) $ is positive definite. 

\end{proof}

Next we specialize to the rational Sturm-Liouville problem 
\begin{equation}
-(D(x) p_{x})_x = \lambda W_0(x) p - \sum_{i=1}^N \frac{W_i(x)}{\lambda - \alpha_i}
- V(x) p = g(x, \lambda) p; \quad p_x(0) = 0 = p_x(L)
\label{eqn:RationalSturmLiouville}
\end{equation}

As motivation for the next result we note that in the special case in
which $V(x),W_i(x)$ and $D(x)$ are all constant we have that the
solution is given by 
\[
p(x) = \cos \sqrt\frac{g(\lambda)}{D}x 
\] 
with the eigenvalue condition $g(\lambda)=k^2 \pi^2 D$. Since
$g(\lambda)$ is Herglotz it is monotone and takes every real value for
$\lambda \in (\alpha_{i-1},\alpha_i)$. Thus we have that for each such
interval $(\alpha_{i-1},\alpha_i)$ there is a sequence of
eigenfunctions $p_k(x)$ which are indexed by $k$, the number of roots
in $ (\alpha_{i-1},\alpha_i)$.   

Equation (\ref{eqn:RationalSturmLiouville}) will typically not, of course, be
solvable in closed form but in the WKB approximation, which is valid
when $g(x, \lambda)$ is large (which occurs for $\lambda=\alpha_I^-$)
we have the approximate eigenfunction
 \begin{equation}
p_k(x,\lambda) \approx \frac{\cos \left( \int_0^x
  \sqrt\frac{g(x, \lambda)}{D(x)} \right)}{\sqrt{D(x)} (g(x, \lambda))^{\frac14}}
\label{eqn:WKB}
\end{equation}
where the WKB quantization condition is that 
\begin{equation}
\int_0^L
\sqrt{\frac{g(x, \lambda)}{D(x)}}dx = k \pi.
\label{eqn:Quantize}
\end{equation}
 Since the
quantity  $\int_0^L
\sqrt{\frac{g(x, \lambda)}{D(x)}}dx $ is a monotone
function of $\lambda$ whenever $g(x, \lambda)$ is positive it follows
that there is an increasing sequence of (approximate) eigenvalues
asymptotic to $\alpha_i^-$ in each interval $(\alpha_{i-1},\alpha_i)$.   

These heuristics motivate the following version of the
classical Sturm theorem.

\begin{thm}  \label{thm:Sturm}
Consider the rational Sturm Liouville pencil subject to Neumann boundary conditions
\[
-(D(x) p_{x})_x = \lambda W_0(x) p - \sum_{i=1}^N \frac{W_i(x)}{\lambda - \alpha_i}
- V(x) p = g(x, \lambda) p, \qquad
p_x(a) = 0 = p_x(b)
\]
with $D(x),V(x),W_i(x)$ continuous on $[a,b]$ and $D(x),W_i(x) > 0$ on
$[a,b]$. The quantities $\{\alpha_i \}_{i=1}^N$ will be
referred to as the singular values. For ease of notation we will let $\alpha_0=-\infty$ and
$\alpha_{N+1}=+\infty$, and will let $I_j,~~~j \in 0\ldots N$ denote the interval $I_j =
(\alpha_{j},\alpha_{j+1}).$ From the previous result the eigenvalues
are real and semi-simple. Additionally the following oscillation results hold.
\begin{itemize}
\item {\bf Sturm Theorem}  For each integer $k\geq 0$ there exists one
  eigenvalue $\lambda_k^{j}$ in each interval $I_j= (\alpha_{j},\alpha_{j+1})~~~j\in 0 \ldots N$
  such that the corresponding eigenfunction  $p_k^{j}(x) $ has exactly $k$ zeroes
  in the open interval $(a,b)$. 
\item By definition we have that $\lambda_k^{j} >
  \lambda_k^{j'}$ if $j>j'$. Further we have that $\lambda_k^{j}  >
  \lambda_{k'}^{j}$ if $k>k'$. 
\item Each $\alpha_i$ except $\alpha_0$ is an accumulation point of
  eigenvalues, so $\alpha_i, \, i \in 1 \ldots N$ are in the
  essential spectrum.  
\end{itemize}

\end{thm}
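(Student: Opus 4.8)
The plan is to reduce the pencil to the classical one-dimensional oscillation theory via the Pr\"ufer transformation, with the Herglotz monotonicity of $g$ in $\lambda$ as the one genuinely new ingredient. First I would fix $\lambda$, regard $g(x,\lambda)$ as an ordinary potential, and write any solution of $-(D p_x)_x = g(x,\lambda)p$ in Pr\"ufer form $p = \rho\sin\theta$, $D p_x = \rho\cos\theta$. A short computation decouples $\theta$ from $\rho$ and gives the angle equation
\[
\theta_x = \frac{\cos^2\theta}{D(x)} + g(x,\lambda)\sin^2\theta .
\]
The Neumann conditions become $\cos\theta(a)=\cos\theta(b)=0$; normalizing $\theta(a)=\tfrac{\pi}{2}$, the number $\lambda$ is an eigenvalue precisely when $\Theta(\lambda):=\theta(b;\lambda)=\tfrac{\pi}{2}+k\pi$ for some integer $k\ge 0$, and the corresponding eigenfunction then has exactly $k$ zeros in $(a,b)$. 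This is because zeros of $p$ occur at $\theta=n\pi$, and since $\theta_x=1/D>0$ wherever $\sin\theta=0$, the angle crosses each multiple of $\pi$ exactly once and only upward.

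The heart of the matter is to track $\Theta(\lambda)$ on a single interval $I_j=(\alpha_j,\alpha_{j+1})$. I would establish two monotonicity facts. For fixed $x$ the Herglotz property gives $\partial_\lambda g(x,\lambda)=W_0(x)+\sum_i W_i(x)/(\lambda-\alpha_i)^2>0$, so $g(x,\lambda)$ is strictly increasing in $\lambda$ throughout $I_j$; by the comparison principle for the angle equation (a larger $g$ forces a larger $\theta_x$ wherever $\sin^2\theta>0$), $\Theta(\lambda)$ is continuous and strictly increasing on $I_j$. It then remains to compute the endpoint limits. Because each $W_i$ is continuous and strictly positive on the compact interval $[a,b]$ it is bounded below by a positive constant, so the pole terms force $g(x,\lambda)\to-\infty$ uniformly in $x$ as $\lambda\to\alpha_j^+$ and $g(x,\lambda)\to+\infty$ uniformly as $\lambda\to\alpha_{j+1}^-$ (for the unbounded ends $I_0,I_N$ the same follows from the $\lambda W_0$ term). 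Standard estimates for the angle equation then yield $\Theta(\lambda)\to 0^+$ at the left end and $\Theta(\lambda)\to+\infty$ at the right end.

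With $\Theta \colon I_j\to(0,\infty)$ a continuous strictly increasing bijection, the intermediate value theorem produces, for every $k\ge 0$, a unique $\lambda_k^{j}\in I_j$ with $\Theta(\lambda_k^{j})=\tfrac{\pi}{2}+k\pi$, whose eigenfunction has exactly $k$ zeros; this is the Sturm statement. The ordering claims are then immediate: $\lambda_k^{j}>\lambda_k^{j'}$ for $j>j'$ because the intervals $I_j$ are themselves ordered on the real axis, and $\lambda_k^{j}>\lambda_{k'}^{j}$ for $k>k'$ by strict monotonicity of $\Theta$ on $I_j$. Finally, for $i\in 1\ldots N$ the eigenvalues $\{\lambda_k^{\,i-1}\}_{k\ge 0}$ lie in $I_{i-1}=(\alpha_{i-1},\alpha_i)$ with $\Theta(\lambda_k^{\,i-1})=\tfrac{\pi}{2}+k\pi\to\infty$; since $\Theta\to\infty$ only at the right endpoint $\alpha_i^-$, we get $\lambda_k^{\,i-1}\uparrow\alpha_i$, so $\alpha_i$ is an accumulation point of eigenvalues and hence lies in the essential spectrum. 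The excluded $\alpha_0=-\infty$ is not a finite point, and the eigenvalues of $I_0$ accumulate at $\alpha_1$ rather than at $-\infty$.

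I expect the main obstacle to be the endpoint limits, in particular a rigorous proof that $\Theta(\lambda)\to 0^+$ as $g\to-\infty$: one must show the angle does not merely decrease but collapses toward the stable near-zero equilibrium $\theta^*$ of the angle equation, where $\tan^2\theta^*\sim 1/(|g|D)\to 0$, uniformly over $[a,b]$. This, together with the uniform divergence $g\to+\infty$ needed for $\Theta\to\infty$, is exactly where the continuity and strict positivity of the $W_i$ on the compact interval are essential. Everything else is a transcription of the classical self-adjoint oscillation theory, the sole new input being the Herglotz monotonicity $\partial_\lambda g>0$, which replaces the usual linear dependence on the spectral parameter and is precisely what confines the eigenvalue flow to a single interval $I_j$.
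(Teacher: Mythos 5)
Your proposal is correct and follows essentially the same route as the paper's proof in Appendix A: a Pr\"ufer-angle reduction, strict monotonicity of the angle in $\lambda$ obtained from the Herglotz property $\partial_\lambda g>0$ via a comparison/Gronwall argument, and the endpoint limits $g\to-\infty$ as $\lambda\to\alpha_j^+$ and $g\to+\infty$ as $\lambda\to\alpha_{j+1}^-$ to trap the angle below the first eigenvalue crossing and then sweep out one eigenvalue (and one new zero) per half-turn. The only notable difference is minor but fortunate: your modified Pr\"ufer variables $(p,\,Dp_x)=\rho(\sin\theta,\cos\theta)$ yield an angle equation free of the $D_x$ term appearing in the paper's version, so your argument uses only the stated continuity of $D$ rather than implicitly requiring its differentiability.
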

In other words we have a Sturm oscillation type theorem that holds in
every interval $(\alpha_i,\alpha_{i+1})$: the eigenvalues in each such
interval are increasing and  indexed by the number of roots of the
corresponding eigenfunction in $(a,b)$.

\begin{remarks}
We first note that the
fact that the pencil has finite accumulation points of eigenvalues
means that the original operator is neither compact nor does it have compact
resolvent. This illustrates that the problem in which some
species do not diffuse is a singular perturbation of the problem in which
those same species have a small diffusion coefficient, since in that case it
is straightforward to see that the operator has compact resolvent. 

We also note that the more general operator 
\[
-\frac{d}{dx} \left( D(x) u_x\right)  = \lambda u - \sum_{i=1}^N\frac{W_i(x)}{\lambda - \alpha_i(x)}
\]has only real eigenvalues by the argument given previously but has essential spectrum for $\lambda
\in \cup_{i=1}^N \ran(\alpha_i(x))$. This type of operator has been
analyzed by Adamjan, Langer and Langer
\cite{Adamjan.Langer.Langer.2001}  for $N=1$, including construction
of the spectral measure corresponding to the essential spectrum. 
 \end{remarks}
 
\begin{proof}
The proof hinges on the fact that the (eigenvalue dependent) potential
$g(x, \lambda)$ has the property that 
\begin{itemize}
\item $g(x,\lambda) \rightarrow -\infty$ as $\lambda \rightarrow
  \alpha_i^+$
\item $g(x,\lambda) \rightarrow +\infty$ as $ \lambda \rightarrow
  \alpha_{i+1}^-.$
\item $\frac{\partial g}{\partial \lambda} > 0,$
\end{itemize}
which are essentially the conditions needed to apply the classical
Sturm oscillation theorem for a one dimensional second order
operator. For the convenience of the reader a proof is given in Appendix 
\ref{app:sturm}. 
\end{proof}

\begin{example} \label{sec:Example3p9}
To illustrate this theorem we consider the rational Sturm-Liouville
pencil
\[
-u_{xx} + \sin(x) u = \lambda u - \frac{.2 + \cos^2(x)}{\lambda-2} u,
\qquad u(0)=0=u(\pi)
\] 
which is equivalent to the system 
\begin{align}
&-u_{xx} + \sin(x) u + v = \lambda u; \qquad u(0)=0=u(\pi) \nonumber \\
& \alpha u + (.2 + \cos^2(x)) u = \lambda v  
\label{eqn:SturmLiouvilleSystem}
\end{align}

\begin{figure}[htbp]
\begin{center}
\includegraphics[width=\textwidth]{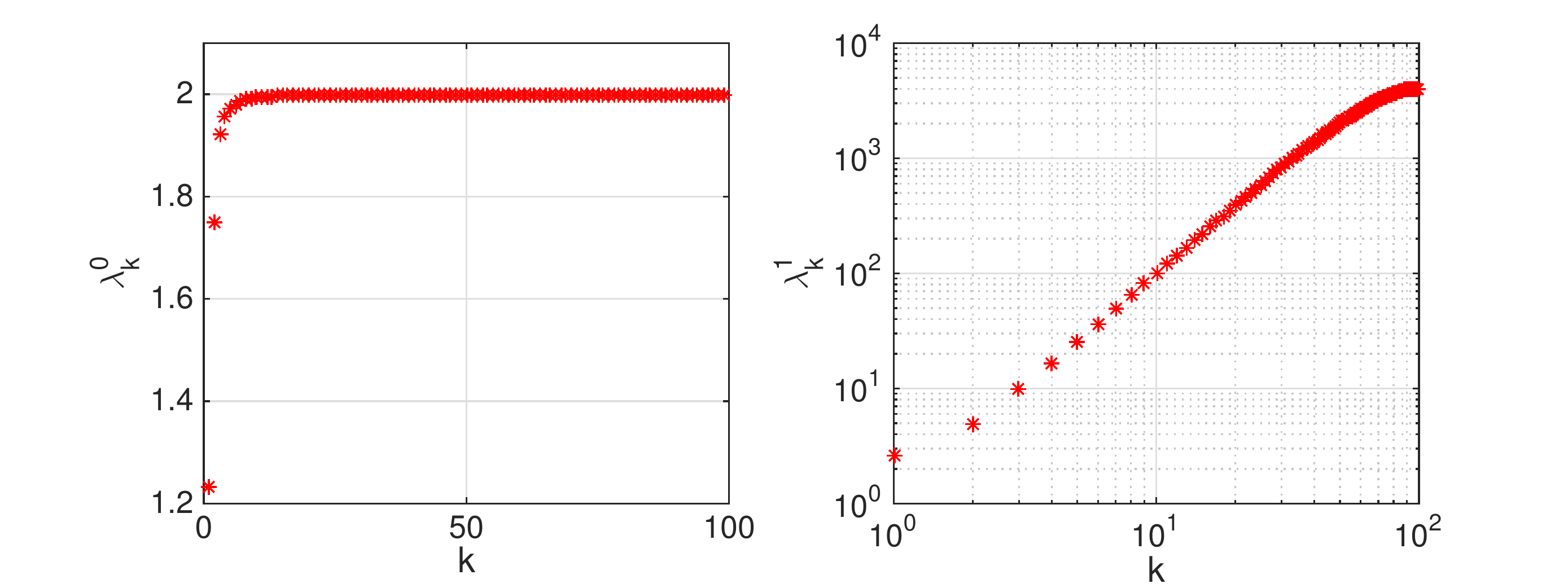}
\caption{Eigenvalues for Eqn. \eqref{eqn:SturmLiouvilleSystem}}
\label{fig:Ex3p9_evalues}
\end{center}
\end{figure}
There is one singular value, $\alpha_1=2$, and two open intervals on
the real line for which ${\bf H}(\lambda)$ is defined,
$(-\infty,2)$ and $(2,+\infty)$. We solved the system
(\ref{eqn:SturmLiouvilleSystem}) numerically as described in Appendix \ref{app:num}.

 Using the discretization described with $n_x=100$ (see Appendix \ref{app:num} for details), we will
 have 99 eigenvalues in the interval $I_0 = (-\infty,2)$, denoted
 $\lambda^0_k$, and 99 eigenvalues in the interval $I_1 = (2,\infty)$,
 denoted $\lambda^1_k$. The first set ($\lambda^0_k$) will have an
 accumulation point at $\alpha_1=2$, the second set ($\lambda^1_k$) at
 $\alpha_2= \infty$; this is shown in
 Fig. \ref{fig:Ex3p9_evalues}. The corresponding eigenfunctions we
 will index in the same way: i.e. as $u^j_k$ for the $k$th eigenvalue
 in interval $I_j$. We compare these eigenvalues with the WKB
 prediction $\frac{1}{\pi}\int_0^\pi \sqrt{\lambda - \sin(x) -\frac{.2
     + \cos^2(x)}{\lambda-2}} dx = k$, which we also evaluated
   numerically. 

The numerical values of
eigenvalues for $k\in 0 \ldots 4$ are:
\begin{center}
\begin{tabular}{||c||c|c|c|c|c||}
\hline\hline
$\lambda$ Interval & $k=0$ & $k=1$ & $k=2$ & $k=3$ & $k=4$ \\\hline
$(-\infty,2)$ (Numerical) & 1.22 & 1.75 & 1.92 & 1.95 & 1.97 \\\hline
$(-\infty,2)$ (WKB) & 1.029 & 1.773 & 1.916 & 1.956 & 1.973 \\\hline
$(2,\infty)$  (Numerical) & 2.59 & 4.88 & 9.65 & 16.53 & 25.41\\\hline
 $(2,\infty)$  (WKB) & 2.68 & 4.88 & 9.73 & 16.69 & 25.67\\\hline
\end{tabular}
\end{center}

The corresponding eigenfunctions,
$u^0_k$ and $u^1_k$, are shown in Fig.  \ref{fig:efunctions}. The left column shows the primary ($u^0_k$) and auxiliary eigenfunctions ($v^0_k$) for the interval $I_0$, while the right column shows $u^1_k$ and $v^1_k$. Note that $u^j_k$ has exactly $k$ zeros, as expected, and that the nodal sets of $u^j_k$ and $v^j_k$ coincide for each $j,k$. The functions $u^0_k$ and $v^0_k$ are of opposite sign, since $\lambda^0_k < 2$, while $u^1_k$ and $v^1_k$ are same-signed.
\begin{figure}[htbp]
\begin{center}
\includegraphics[width=0.5\textwidth]{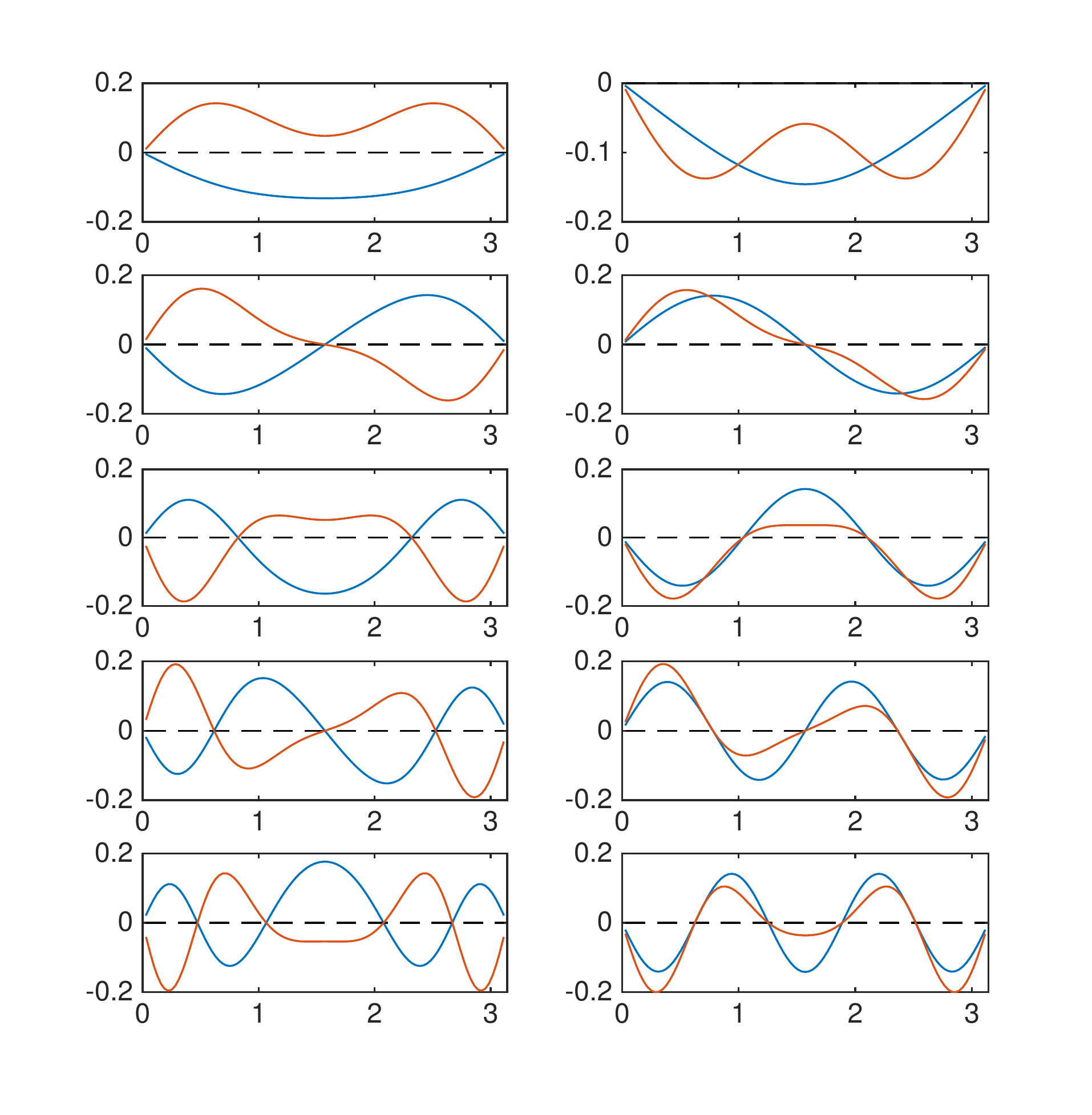}
\caption{Left column: Eigenfunctions corresponding to $\lambda \in (-\infty, 2)$. Both primary $u$ (blue) and auxiliary $v$ functions (red) are shown. From top to bottom: $u^0_k$  and $v^0_k$, for $k=0,1,2,3$ and $4$. Right column: same, but for $\lambda \in (2,\infty)$; i.e.  $u^1_k$  and $v^1_k$ }
\label{fig:efunctions}
\end{center}
\end{figure}

%
%

\end{example}

We next use
Eqn. \eqref{eqn:WKB}, \eqref{eqn:Quantize} to make the following prediction for the asymptotics of the
eigenvalues accumulating near $\lambda=2$:
\[
\lambda_k^0 \approx 2- \frac{\left(\frac{1}{\pi}\int_0^\pi \sqrt{.2 +
      \cos^2(x)}\right)^2}{k^2}+ O(k^{-3}) \approx 2-\frac{.649}{k^2},
\]
while the other branch of eigenvalues has the asymptotics
\[
\lambda_k^1 \approx k^2 + O(1)
\]
The plots in Figure \ref{fig:evalues_WKB} depicts a graph of $(\lambda_k^1-2)k^2$
vs $k$ for the eigenvalues in the region  $(-\infty,2)$ together with a plot
of  $\lambda^2_k k^{-2}$ vs. $k$ for the eigenvalues in the region
$(2, \infty)$. These are expected to asymptote to $.649$ and $1$
respectively. This is repeated for three different values of $\Delta
x$, the discretization size, in order to assess the role of
discretization error. In each case we see that the eigenvalue curve
hugs the asymptote for a while before falling away due to
discretization error, and that as the size of the discretization is
decreased the curve hugs the presumed asymptotic value for longer, as
one would expect. 

\begin{figure}[htbp]
\begin{center}
\includegraphics[width=\textwidth]{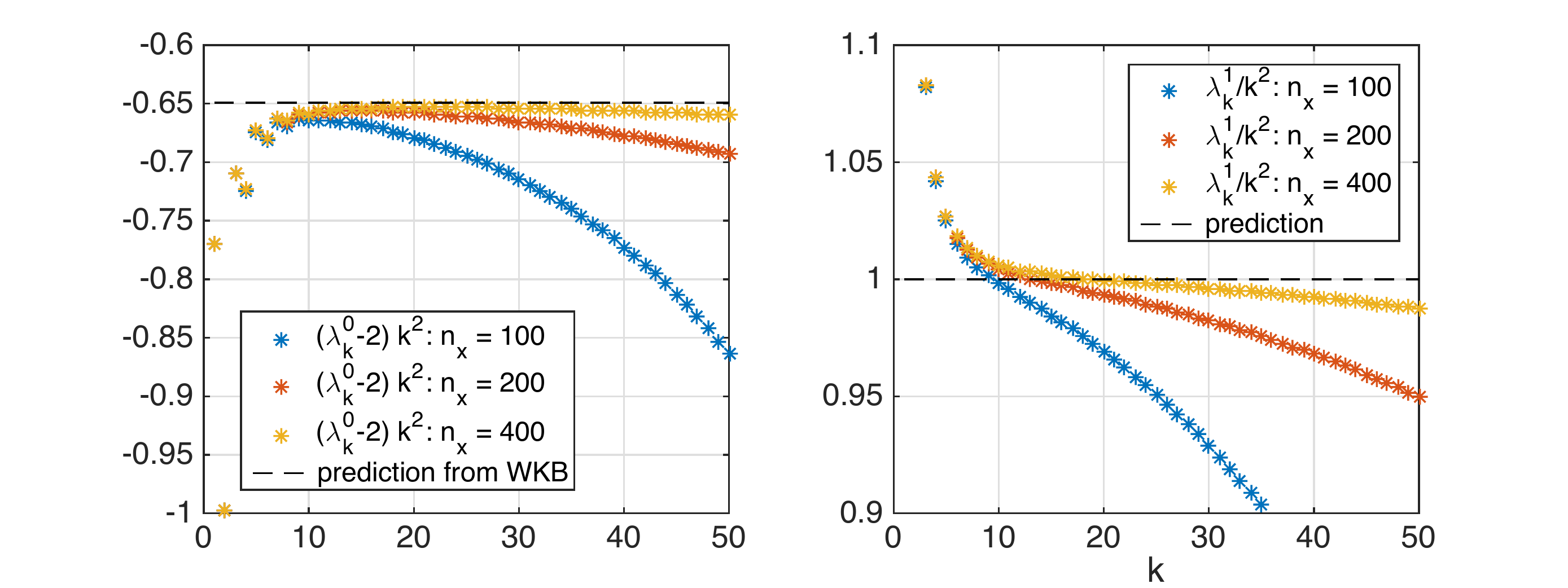}
\caption{Asymptotics for the eigenvalues of Eqn. \eqref{eqn:SturmLiouvilleSystem}. Left: $(\lambda^0_k - 2) k^2$, confirming a WKB calculation for eigenvalues that accumulate at $\alpha_1 = 2$. Right: $\lambda^2_k/k^2$ for eigenvalues that accumulate at $\alpha_2 = \infty$, showing $k^2$ scaling.}
\label{fig:evalues_WKB}
\end{center}
\end{figure}

\section{Application: Spatial models in epidemiology}
\label{sec:murray_model}
We now consider an example with detailed numerics. 

\vspace{0.4cm}

\begin{tabular}{|l|c|}
	\hline
\textbf{Variable} & \textbf{Unit}  \\
	\hline \hline
$S$ susceptible host density & foxes/km$^2$ \\
$E$ exposed host density & foxes/km$^2$ \\
$I$ infected hosts density & foxes/km$^2$ \\
$t$ time & year \\
\hline
\textbf{Parameter} & \textbf{Value}  \\
	\hline \hline
$a$ : average birth rate           & 1 per year \\ 
$b$ : average intrinsic death rate & 0.5 per year \\
$1/\alpha(x)$ : average duration of clinical disease & $\approx 5$ days \\
$1/\sigma$ : average incubation period & 28 days \\
$\beta(x)$ : disease transmission coefficient & $\approx 80$ km$^2$ per year \\ 
$K$ : carrying capacity                  & 0.25 to 4 foxes per km$^2$ \\
$D$ : diffusion coefficient & 50  to 330 km$^2$ per year\\
	\hline
\end{tabular}

\vspace{0.4cm}

In \cite{murray1986} the following one-dimensional model for the spread of rabies among foxes was considered.
\begin{align*}
& \frac{\partial E}{\partial t} = \beta(x) I S - \sigma E - \left( b + (a-b)\frac{N}{K}\right) E& \\
&\frac{\partial I}{\partial t}=\frac{\partial}{\partial x} \left( D(x) \frac{\partial I}{\partial x} \right) + \sigma E - \alpha(x) I -\left(b + (a-b)\frac{N}{K}\right)I& \\
&\frac{\partial S}{\partial t} = (a-b) \left(1-\frac{N}{K}\right) S-\beta(x) I S& \\
&\frac{\partial I}{\partial \eta}|_{\partial \Omega}=0,&
\end{align*}
where $S, E$ and $I$ are the population densities of the susceptible, exposed and infectious foxes,
respectively and $N=S+E+I$ is the total fox population. 
When susceptible hosts contact infectious ones, they become exposed at rate $\beta(x)$.
Exposed hosts remain in that class for an average period of $\sigma^{-1}$ before they
transition into the infectious class. Hosts remain infectious for an average period 
of $\alpha(x)^{-1}$ before they die. 
While infectious, hosts experience random wandering; this movement is modeled as diffusion with coefficient $D(x)$.  
All host classes experience average birth rate $a$
and death rate $b$. They also compete for resources and so it is assumed that the
environmental carrying capacity is $K$.       
Typical parameter values are given in the table above. 

The authors \cite{murray1986} assumed that $\alpha,\beta, D$ are positive continuous functions 
depending on the spatial variable $x$.  In this case there is the disease-free
steady state $(E=0, I=0, S=K)$. Linearizing about the steady state gives the 
eigenvalue problem 
\begin{align}
&-(\sigma+a) E + \beta(x) K I = \lambda E& \nonumber\\
& \frac{\partial}{\partial x} \left( D(x) \frac{\partial I}{\partial x} \right) + \sigma E - (\alpha(x)+a)I = \lambda I &  \label{eqn:Murray} \\
& \frac{\partial I}{\partial x}  |_{\partial \Omega} = 0. \nonumber& 
\end{align}
Algebraically eliminating $E$ gives the eigenvalue pencil 
\[
 \frac{\partial}{\partial x} \left(D(x) \frac{\partial I}{\partial x} \right) -(\alpha(x)+a) I = \left( \lambda - \frac{\sigma \beta(x) K }{\lambda + \sigma + a } \right) I = g(\lambda,x) I \qquad \frac{\partial I}{\partial x} |_{\partial \Omega} = 0.
\]
Our first observation is that the function $g(\lambda, x) = \lambda - \frac{\sigma \beta(x) K}{\lambda + \sigma + a}$ is a Herglotz function for all $x$ as 
long as $\sigma,\beta(x), K, a>0$, and thus the eigenvalues of this problem are all real.

This model was later considered by Wang and Zhao\cite{wang2012}, who used the 
next-generation operator approach \cite{vanden2002} to give a recipe for 
computing the reproduction number, $R_0$, and understanding its
dependence on parameters in the spatially heterogeneous problem. 
The basic reproductive number $R_0$ is an important epidemiological threshold,
since it determines whether the disease can invade the population
(with $R_0>1$ indicating that the disease-free steady state is unstable)
or not (with $R_0<1$ indicating that the disease-free steady state is locally asymptotically stable).  

Defining $\tilde{\lambda}=-\lambda$, we see that the eigenvalue problem directly above can be written as:
\begin{eqnarray}
 -\frac{d}{dx} \left( D(x) \frac{d I}{dx} \right)  +(\alpha(x)+a) I + \frac{\sigma \beta(x) K }{\tilde{\lambda} - (\sigma + a) }& =&  \tilde{\lambda}I ;  \;  \frac{d I}{dx} \vert_{\partial \Omega} = 0 \label{Wang_Eprb_orig}
  \end{eqnarray}
 to which we can immediately apply Theorem \ref{thm:Sturm} to see that this has a smallest real eigenvalue which is simple and which will govern stability of Eqn. \eqref{eqn:Murray} (this is Lemma 4.1 from \cite{wang2012}). 
 
Wang and Zhao analyze $R_0$ through a related eigenvalue problem,
\begin{equation} -\frac{d}{dx}\left( D(x) \frac{d\phi}{dx}\right) + (\alpha+a)\phi = \mu \frac{\sigma K \beta(x)}{\sigma+a}\phi;
 \label{Wang_Eprb_assoc}
\end{equation}
they show that the reproduction number (for Eq. \eqref{Wang_Eprb_orig})
is the inverse of the principal eigenvalue of
Eqn. \eqref{Wang_Eprb_assoc}: i.e. $R_0 = \frac{1}{\mu_1}$.
However, this requires us to solve a \textit{second} eigenvalue problem, which is not obviously identical to the first.  

We next show that there is a relationship between $\lambda$ and
$R_0$, which eliminates our need to solve the second eigenvalue
problem. The basic idea is to note that Eq \eqref{Wang_Eprb_orig} with
$\tilde \lambda=0$ is the same as Eq \eqref{Wang_Eprb_assoc} with
$\mu=1$. It follows from the Sturm oscillation theorem that if $\mu=1$
is the $k^{th}$ eigenvalue of \eqref{Wang_Eprb_assoc}  then
$\lambda=0$ is the  $k^{th}$ eigenvalue of \eqref{Wang_Eprb_orig}, because the (identical) eigenfunctions must have the same number of zeros. Moreover, $\mu=1$
lies between the $k^{th}$ and $(k+1)^{st}$ eigenvalues of
\eqref{Wang_Eprb_assoc} if and only if $\tilde \lambda=0$ lies between
the $k^{th}$ and $(k+1)^{st}$ eigenvalues of
\eqref{Wang_Eprb_orig}. In conclusion, solutions to \eqref{Wang_Eprb_orig} are stable if the
lowest eigenvalue of \eqref{Wang_Eprb_assoc}  satisfies $\mu_1>1$. 

\begin{figure}[htbp]
\begin{center}
\includegraphics[width=0.32\textwidth]{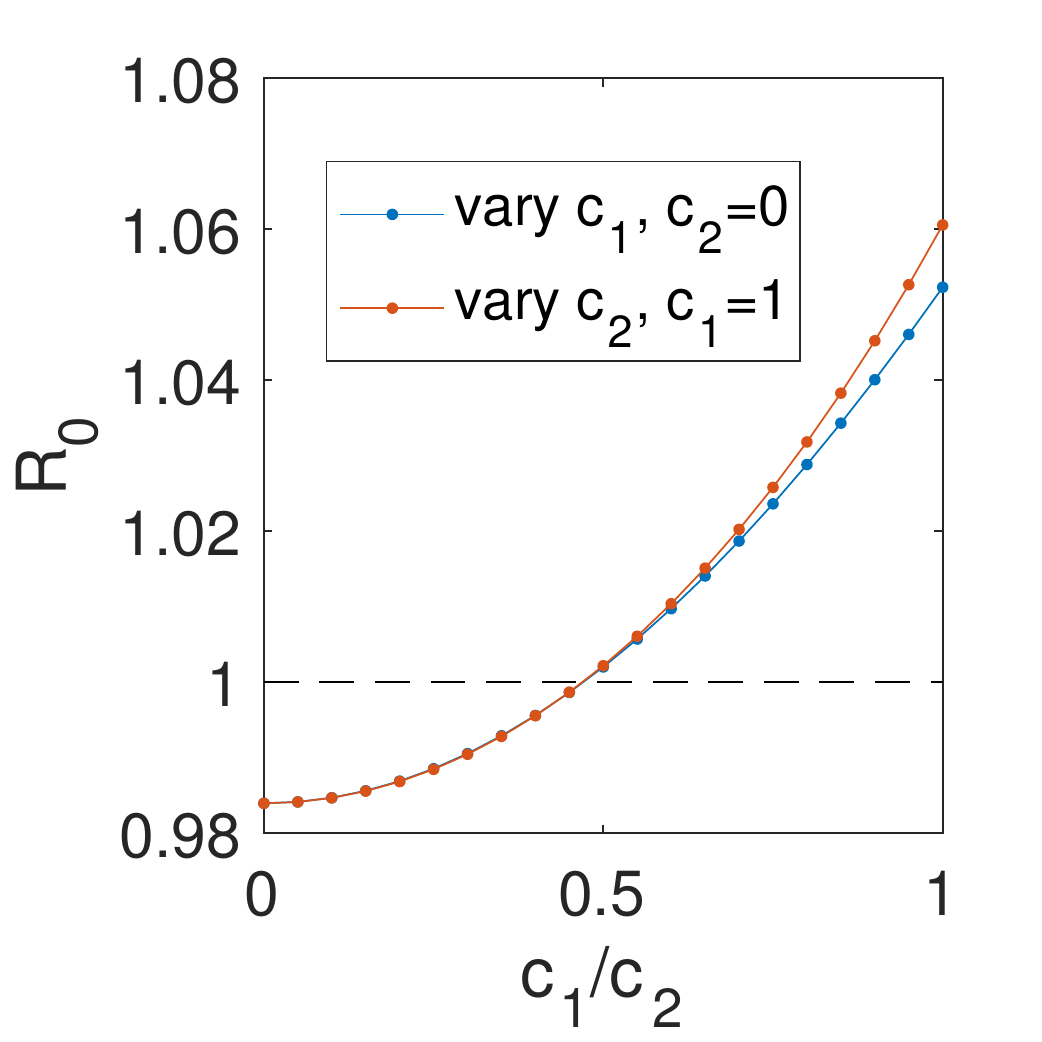}
\includegraphics[width=0.32\textwidth]{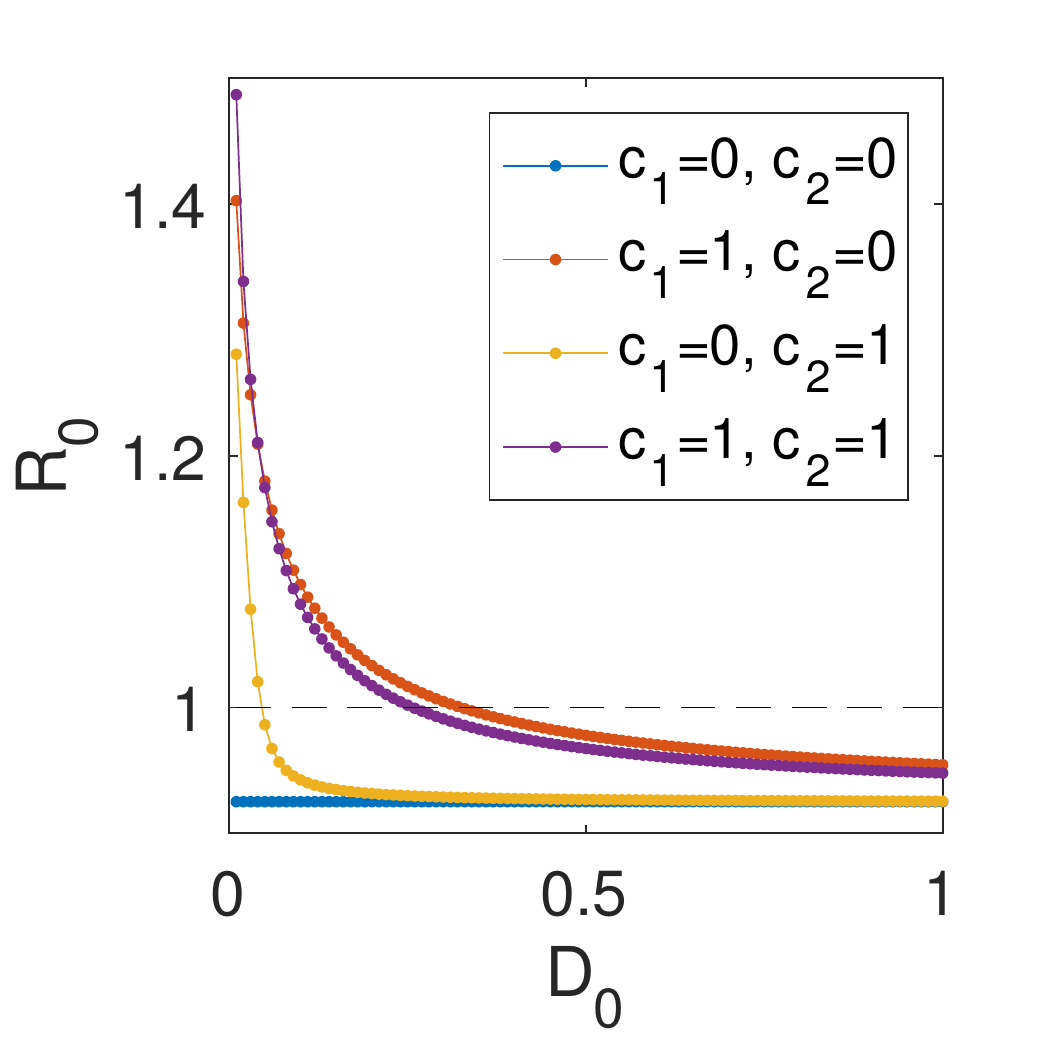}
\includegraphics[width=0.32\textwidth]{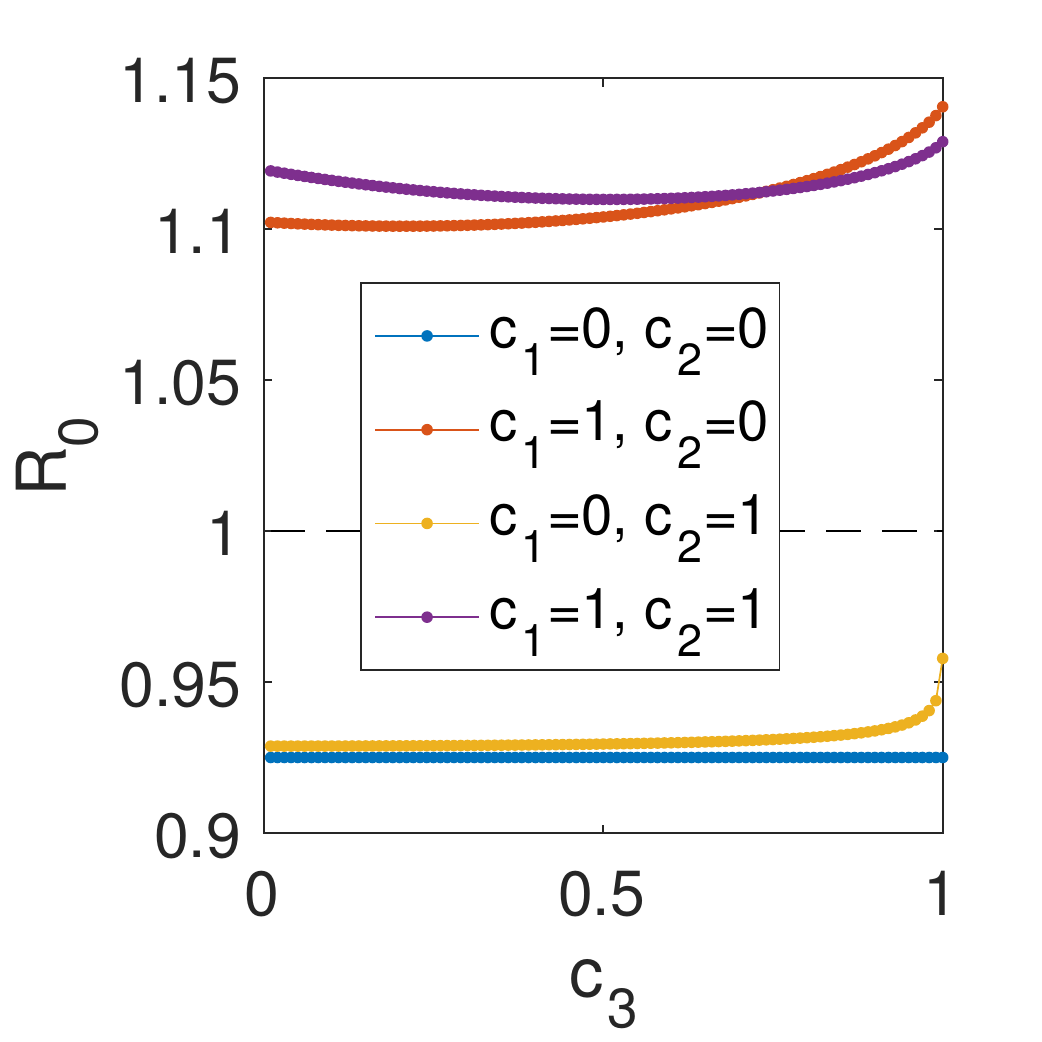}
\caption{The impact of spatial heterogeneity on reproduction number. (A) For fixed $[\beta]$, $R_0$ is minimized for the homogeneous case; similarly for fixed $[\alpha]$. (B) When $\beta$ and $\alpha$ are spatially homogeneous (i.e. $\beta = [\beta]$ and $\alpha = [\alpha]$), the baseline level of diffusion, $D_0$, has no impact on $R_0$. $D(x) = D_0(1 + \cos(\pi x))$; see text for other parameters. (C) When $\beta$ and $\alpha$ are spatially homogeneous, the level of spatial heterogeneity of diffusion has no impact on $R_0$. $D(x) = 0.0685(1 + c_3 \cos(\pi x))$; see text for other parameters.}
\label{fig:ReproWangFig1}
\end{center}
\end{figure}

We next reproduce some of Wang and Zhao's numerical results. One such finding concerns the impact of spatial heterogeneity; 
in particular that $\mu_1 \le \bar{\mu}_1$, where ${\mu}_1$ is the principal eigenvalue of their associated eigenvalue problem, and $\bar{\mu}_1$ is the principal eigenvalue where $\beta$ and $\alpha$ are replaced by their spatial averages (this is Lemma 4.4 in \cite{wang2012}). Since $R_0 = 1/\mu_1$, this means that $R_0$ is minimized for the spatially homogeneous problem. 
We used the constant parameters: $a = 0.0027$, $\alpha=0.2$, $\sigma=0.0357$, $K=0.98$. Then $\beta(x) = 0.2192(1+c_1 \cos(\pi x))$.
Fig. \ref{fig:ReproWangFig1}A shows that $R_0$ is minimized when $c_1=0$. Similarly, restoring $\beta = 0.2192$ and varying $\alpha(x) = 0.2(1+c_2\cos(\pi x))$, $R_0$ is minimized for the spatially homogenous case.

We next look at the effect of varying $D_0$, the baseline diffusion coefficient, when spatial heterogeneity in $\beta$ and/or $\alpha$ are present. 
To get a fair comparison for the role of heterogeneity, we used $\beta(x) = 0.2192 (1+ c_1 \cos(\pi x))$ and $\alpha(x) = \frac{2}{\pi}(1+ c_2 \left(\sin(\pi x)-\frac{2}{\pi})\right)$, so that $[\alpha]$ is preserved as $c_2$ changes; the variable diffusion is: $D(x) = D_0(1 + \cos(\pi x))$.  Remarkably, $D_0$ has no effect on $R_0$ when both $\beta$ and $\alpha$ are constant.

We can also vary the modulation magnitude of the diffusion, i.e. allow $D(x) = 0.0685(1 + c_3 \cos(\pi x))$, where $c_3$ is varied. Wang found that again $R_0$ is constant when $\beta$ and $\alpha$ are spatially homogeneous. We confirm this in Fig. \ref{fig:ReproWangFig1}C, again including heterogeneity in either $\beta$, or $\alpha$, both, or neither.  

Wang and Zhao next considered the problem of designing a vaccine strategy, based on the hypothesis that the impact of the vaccine could be modeled by changes to disease transmission $\beta(x)$.  They suppose that
\[ \beta(x) = 6x(1-x) \]
and consider how $R_0$ would change if we modified 
\[ \beta \rightarrow \beta \frac{1}{1+v_0(x)} \]
where
\[ v_0 (x)  = \left\{ \begin{matrix} \frac{c_0}{L} & 0 \le a_0 \le x < a_0+L \le 1\\
						0 & {\rm otherwise}
		\end{matrix} \right.
		\]
The authors interpret $\frac{v_0}{1+v_0} = 1-\frac{1}{1+v_0}$ as the vaccine efficacy.  The spatial function can be motivated by the idea of distributing a total quantity of $c_0$ over an area of size $L$, beginning at $a_0$. 

\begin{figure}[htbp]
\begin{center}
\includegraphics[width=\textwidth]{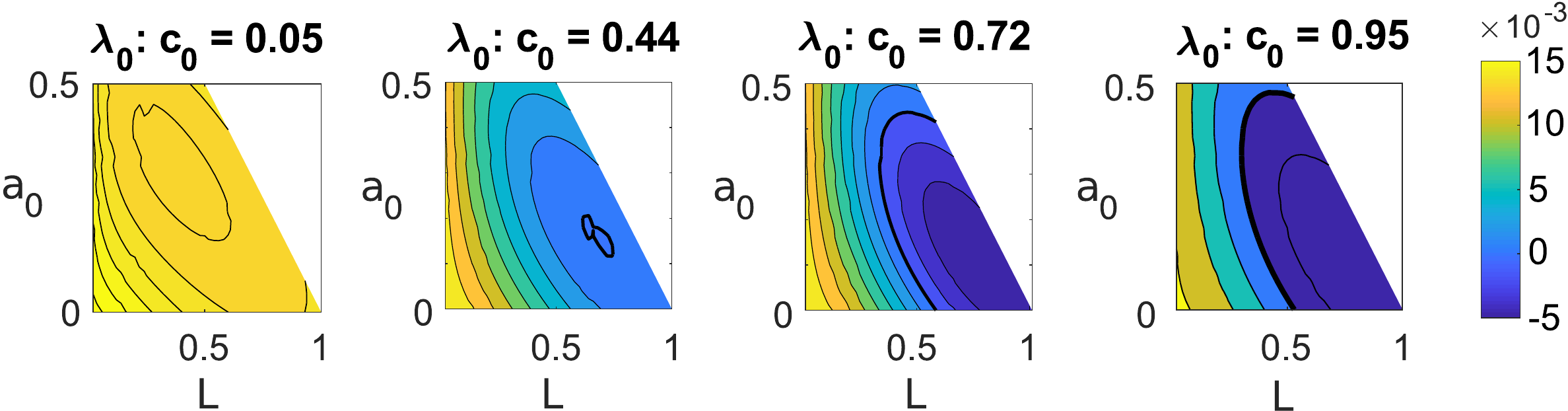}
\caption{Optimizing a vaccine strategy to control rabies. Principal eigenvalue $\lambda_0$, indicating whether an initial infection will spread ($\lambda_0 > 0$) or dissipate ($\lambda_0 < 0$). In each panel, $c_0$ is fixed while $a_0$ and $L$ are varied. From left to right: $c_0 = 0.05, 0.44, 0.72, 0.95$. }
\label{fig:ReproWangFig2}
\end{center}
\end{figure}

To understand the impact of a vaccine strategy, we solved Eqn. \eqref{Wang_Eprb_orig} by sweeping over the parameters $c_0$, $a_0$ and $L$. We surveyed $0 \le c_0, L \le 1$; by symmetry of $\beta(x)$ it suffices to consider $0 \le a_0 \le 0.5$, and we limited our consideration to $a_0 + L \le 1$.  
Other parameters were as used previously: $a = 0.0027$, $b=a/2$, $\sigma=0.0357$, $K=1.5$, $\alpha(x) = \alpha_0=0.2$, $D(x) = D_0 = 0.1371$. 

First, we can ask how much vaccine is required to prevent an epidemic; i.e. how large does $c_0$ need to be for $\lambda_0 < 0$.  Not surprisingly, $\lambda_0$ decreases with $c_0$; the minimum value of $c_0$ for which stability was observed was $0.44$.  
However, stability also depends strongly on where the vaccine is used, as well as its spatial distribution: in Fig. \ref{fig:ReproWangFig2}
we show $\lambda_0$ as a function of $a_0$ and $L$, for selected values of $c_0$: for $c_0 \ge 0.44$ (three rightmost panels), the boundary of the stable region is shown as bold. Note that even for the largest vaccine quantity tested ($c_0=0.95$), a suboptimal distribution strategy can easily fail to contain the infection (i.e. $\lambda_0 > 0$).

Finally, we can examine the optimal vaccine strategy --- the configuration that minimizes $\lambda_0$, given a total quantity $c_0$. 
For the values of $c_0$ shown in Fig. \ref{fig:ReproWangFig2}, we found the $(a_0,L)$ values at which $\lambda_0$ was minimized, and show the corresponding vaccine strategy and its effect on the disease transmission rate (i.e. the modified $\beta(x)$) in Fig. \ref{fig:ReproWangOptVacStrat}. In all cases, the optimal strategy is symmetric; the vaccine should always be distributed on the area where $\beta$ is largest. 

\begin{figure}[htbp]
\begin{center}
\includegraphics[width=\textwidth]{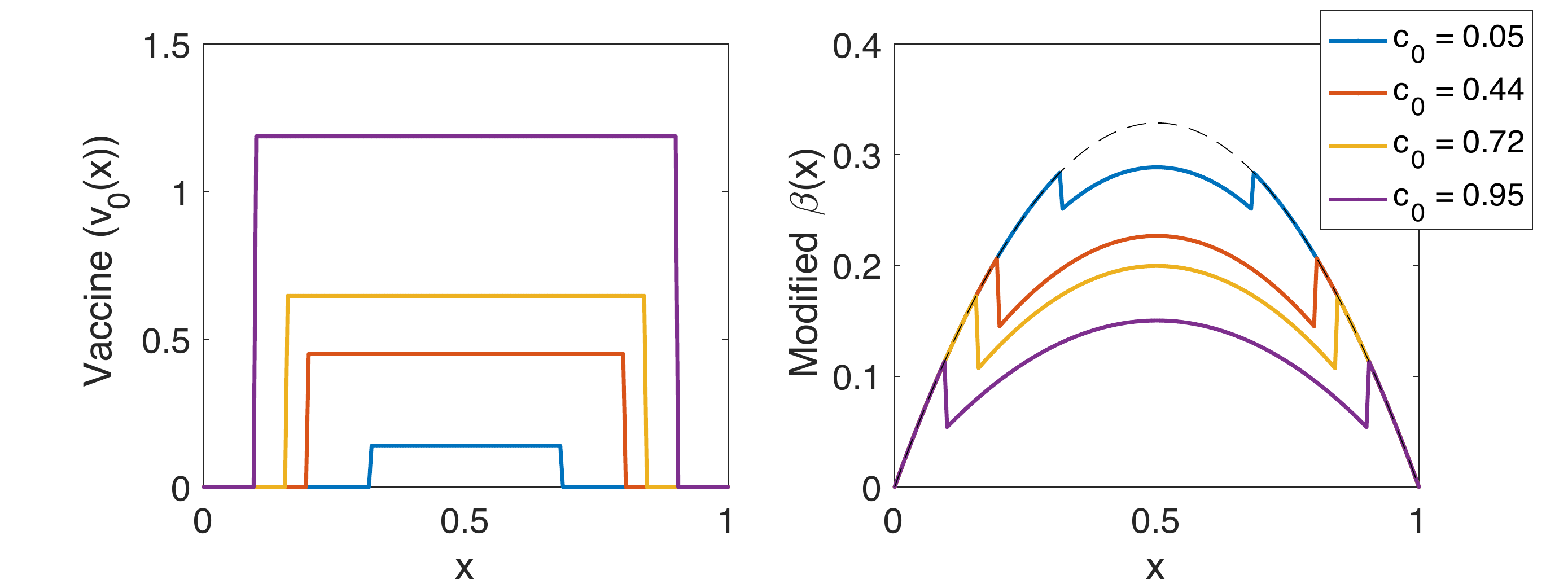}
\caption{The optimal vaccine strategy to control rabies, given a total quantity of available vaccine $c_0$. Left panel: The optimal strategy obtained by minimizing over dispersal location $a_0$ and dispersal area $L$. Right panel: the resulting disease transmission coefficient, showing a decrease in the dispersal area. For reference, $\beta(x)$ with no vaccine is shown (black dashed line).}
\label{fig:ReproWangOptVacStrat}
\end{center}
\end{figure}

\section*{Acknowledgments}
The authors are grateful for support from the University of  Illinois Research 
Board through grant RB 17060 to Z.R., from the National Science
Foundation through DMS-1615418 to J.C.B.,  

\begin{appendices}

\section{Proof of the Sturm oscillation theorem for rational Herglotz
  pencils}
\label{app:sturm}

We give a short proof of the Sturm property for the rational Herglotz
pencil problem 
\[
-\partial_x(D(x)p_{x}) = \left( -V(x) + \sum_{i=1}^N \frac{\beta_i(x)}{\lambda - \alpha_i}
+ \lambda \right) p = g(x,\lambda) p \qquad p_x(a) = 0 = p_x(b) 
\] 
We take the convention that $\alpha_0=-\infty$ and
$\alpha_{N+1}=+\infty$. Then in each interval
$(\alpha_{j-1},\alpha_j),~~~j\in\{1\ldots N\}$ there exists a strictly increasing sequence of (real)
eigenvalues $\{\lambda^{j}_k\}_{k=0}^\infty \in
(\alpha_{j-1},\alpha_j)$ whose only accumulation point is $\alpha_j$
such that the eigenfunction $\phi_k^{j}(x)$ has exactly $k$ (simple)
roots in the interval $(a,b)$. The exact boundary conditions used are 
not particularly important -- the same basic arguments work with
Dirichlet, Robin or (with some additional modifications) periodic
boundary conditions -- but we work with Neumann boundary conditions
since these are most appropriate for the models considered here. 

The easiest way to prove the Sturm theorem is via the Pr\"ufer angle, a change
of variable for the second order linear differential equation defined
by 
\begin{align}
& p(x,\lambda) = R(x,\lambda) \cos(\Theta(x,\lambda))
\label{eqn:prufer} \\
& p_x(x,\lambda) = R(x,\lambda) \sin(\Theta(x,\lambda))
\label{eqn:prufder}. 
\end{align}   
It is well-known (at least to those that well-know it) that if 
$p(x,\lambda)$ satisfies the second order equation 
\[
-\partial_x (D(x) p_{x}) = g(x,\lambda) p
\] then 
the
Pr\"ufer angle $\Theta(x,\lambda)$ satisfies the equation 
\begin{equation}
\Theta_x = - \sin^2 \Theta - g(x,\lambda) \cos^2 \Theta -\frac{D_x
  R}{D} \sin \Theta \cos \Theta 
\label{eqn:Angle}
\end{equation}
while the radial function $R(x,\lambda)$ satisfies the equation 
\begin{equation}
R_x= \left(1-\frac{g}{D} \right) R \sin(\Theta) \cos(\Theta)-\frac{D_x}{D} R \sin^2
\Theta. 
\label{eqn:Radius}
\end{equation}
We will mostly work with the angle $\Theta(x,\lambda)$: the radial
function $R(x,\lambda)$ is important only insofar as equation (\ref{eqn:Radius})
shows that $R(x,\lambda)$ cannot vanish unless it
is identically zero, implying that the Pr\"ufer coordinate change is
well-defined for all $x$. 

The basic idea of the proof is that the winding of the Pr\"ufer angle
is a good measure of the winding of the eigenfunctions, and that this
can be understood via equations (\ref{eqn:prufer}, \ref{eqn:prufder}). 
Since $R(x,\lambda)$ cannot vanish, the roots of $p$ correspond to places where
$\Theta(x,\lambda)=k \pi$ while critical points of $p$ correspond to places where 
$\Theta(x,\lambda)=(k+\frac12)\pi$. We will look in particular at the
solution to (\ref{eqn:Angle}) satisfying the initial condition
  $\Theta(a,\lambda)=0$ (and, for concreteness sake, $R(a,\lambda)=1$), so 
that the corresponding $p$ satisfies $p(a,\lambda)=1,
p_x(a,\lambda)=0.$
In this setting eigenvalues will correspond to values of $\lambda=\lambda^*$
such that $\Theta(b,\lambda^*)=k \pi.$ 

We note several facts. First note that, for fixed values of $x$ the
function $\Theta(x,\lambda)$ is a decreasing function of $\lambda$. 
This follows from differentiating equation \ref{eqn:Angle} with
respect to $\lambda$, using the fact that $\frac{d g}{d\lambda}>0$ and
applying the Gronwall inequality. Next we note that, for $\lambda$
sufficiently close to $\alpha_{j-1}^+,$ $\Theta(x,\lambda)$ must
satisfy $0\leq \Theta(x,\lambda) < \frac{\pi}{2}$, with
$\Theta(x,\lambda)=0$ if and only if $x=0$. To see this note that we
can choose $\lambda$ close enough to $\alpha_{j-1}$ to guarantee that
$g(x,\lambda)<0$. In this case we have that $\Theta_x|_{\Theta=0}>0$
and $\Theta_x|_{\Theta=\frac{\pi}{2}}<0$. This guarantees that the
interval $\Theta \in (0,\frac{\pi}{2})$ is invariant in positive
$x$. Finally note that, if $\Theta = -(k+\frac12)\pi$ then
$\Theta_x=-1$, so the curve $\Theta(x,\lambda)$ always crosses the
line $-(k+\frac12)\pi$ transversely and in the same direction.

The proof now follows from considering a homotopy in $\lambda$: we
begin with the curve $\Theta(x,\lambda)$ with $\lambda$ sufficiently
close to $\alpha_{j-1}$ so that $\Theta(x,\lambda)$ lies in the
interval $(0,\frac{\pi}{2})$. Thus no value of $\lambda$ in this range
can be an eigenvalue since $\Theta(b,\lambda)$ is strictly between $0$
and $\frac{\pi}{2}$, so we are below the spectrum.  

Next we consider increasing $\lambda$, which will decrease
$\Theta(b,\lambda)$. The first eigenvalue occurs when
$\Theta(b,\lambda)=0.$
Call this value $\lambda_0$. While we do not know much about the shape
of the curve $\Theta(x,\lambda_0)$ we do know that it cannot cross the
line $\Theta=-\frac{\pi}{2}$ since at any point where
$\Theta(x,\lambda)=-(k+\frac12)\pi$ we have that $\Theta_x=-1$. 
Hence as $\lambda$ is increased the first time that the curve
$\Theta(x,\lambda)$ can cross the line $\Theta=-(k+\frac12)\pi$ must be
at the endpoint $x=b$. 

This pattern continues as $\lambda$ increases:
$\Theta(b,\lambda)$
decreases and alternately crosses the lines $\Theta=-k\pi$ and
$\Theta=-(k+\frac12)\pi$: the former correspond to eigenvalues and at
the latter $p(x,\lambda)$ picks up a root. Since $R(x,\lambda)$ is
non-vanishing and $\Theta$ and $\Theta_x$ cannot vanish simultaneously
it follows that the $k^{th}$ eigenfunction has exactly $k$ roots in
$(a,b)$. It follows from standard estimate that away from poles of
$g(x,\lambda)$ we have analytic dependence of $\Theta(b,\lambda)$ on
$\lambda$ and so solutions to $\Theta(b,\lambda)=-(k+\frac12)\pi$ 
must be isolated. It is also clear that by choosing $\lambda$
sufficiently close to $\alpha_j$ we can guarantee that $\Theta_x$ is
uniformly large and negative, and thus that $\alpha_j$ is an
accumulation point of eigenvalues. 

\section{Numerical Methods}
\label{app:num}

We use standard methods to solve the eigenvalue problem:
\begin{eqnarray}
H u + q(x) u + \sum_{i=1}^{N} \frac{\beta_i (x) u}{\lambda - \alpha_i} & = & \lambda u; \; 0 \le x \le L  \label{eqn:SLP_ev_prob}
\end{eqnarray}
where $H$ is a Sturm-Liouville operator, such as the second derivative (i.e. $Hu = -u_{xx}$).
We assume that $\beta_i(x) > 0$ on the domain $[0, L]$ and that boundary conditions are of the typical kind:
\begin{eqnarray}
b^0_0 u(0) + b^1_0 \frac{\partial u}{\partial x}(0) & = & 0 \label{eqn:leftBC} \\
b^0_L u(L) + b^1_L \frac{\partial u}{\partial x}(L) & = & 0  \label{eqn:rightBC}
\end{eqnarray}
where $b^0_0 \not= 0$ and/or $b^1_0 \not= 0$, and  $b^0_L \not= 0$ and/or $b^1_L \not= 0$.
Introducing auxiliary functions $v^{(i)} = \frac{\beta_i(x) u}{\lambda-\alpha_i}$, we find 
that Eqn. \eqref{eqn:SLP_ev_prob} is equivalent to
\begin{eqnarray}
H u + q(x) u + \sum_{i=1}^{N} v^{(i)} & = & \lambda u;\\
\beta_i (x) u + \alpha_i v^{(i)} & = & \lambda v^{(i)}, \, i=1,...,N
\end{eqnarray}
We used standard second-order discretizations to evaluate $H u$ at $n_x-1$ interior points of $[0,L]$; e.g. centered differencing for $Hu = -u_{xx}$. For the spatially variable diffusion in \S \ref{sec:murray_model}, we used the stencil
\begin{eqnarray} 
(D(x) u_x)_x  & \approx & \frac{1}{(\Delta x)^2} (D_{i-1/2} u_{i-1} - (D_{i-1/2}+D_{i+1/2}) u_i + D_{i+1/2} u_{i+1}) \nonumber \\
\end{eqnarray}
where $x_i = i \Delta x$, $u_i = u(x_i)$, $D_{i-1/2} = D(x_{i-1/2})$, etc.; i.e. the diffusion function is evaluated at half-intervals.

Finally, the boundary conditions are incorporated with the second-order stencils 
\[ \frac{\partial u}{\partial x} (0) \approx \frac{-3 u_0 + 4 u_1 - u_2}{2 \Delta x}; \qquad \frac{\partial u}{\partial x} (L) \approx \frac{3 u_{n_x} - 4 u_{n_x-1} + u_{n_x-2}}{2 \Delta x}; 
\]
This results in a linear system of size $(N+1) (n_x-1)$, representing the unknown functions $u$ and $v_i$ at each of $n_x-1$ interior points; we numerically approximate its eigenvalues using standard methods (we used the \texttt{eig} function from Matlab R2017b).

Here, we list parameters used for specific cases in the paper.
\begin{itemize}
\item In Example \ref{sec:Example3p9}, we used $L=\pi$ and $n_x = 100$; i.e. $\Delta x = \pi/n_x$ and the number of interior points is $n_x - 1=99$. For Figure \ref{fig:evalues_WKB}, we repeated the computations with $n_x = 200$ and $n_x=400$.
\item For all computations presented in \S \ref{sec:murray_model}, $L=1$ and $n_x = 200$.
\end{itemize} 
 
\end{appendices}



\end{document}